\title{Closing gaps in problems related to Hamilton cycles in random graphs and hypergraphs}
\author{ Asaf Ferber
\thanks{Department of Mathematics, Yale University and Department of Mathematics, MIT. Emails:
asaf.ferber@yale.edu and ferbera@mit.edu.}}
\date{\today}
\theoremstyle{plain}
\newtheorem{theorem}{Theorem}[section]
\newtheorem{lemma}[theorem]{Lemma}
\newtheorem{corollary}[theorem]{Corollary}
\begin{document}
\maketitle

\begin{abstract}

We show how to adjust a very nice coupling argument due to McDiarmid
in order to prove/reprove in a novel way results concerning Hamilton
cycles in various models of random graph and hypergraphs. In
particular, we firstly show that for $k\geq 3$, if $pn^{k-1}/\log n$
tends to infinity, then a random $k$-uniform hypergraph on $n$
vertices, with edge probability $p$, with high probability (w.h.p.)
contains a loose Hamilton cycle, provided that $(k-1)|n$. This
generalizes results of Frieze, Dudek and Frieze, and reproves a
result of Dudek, Frieze, Loh and Speiss. Secondly, we show that
there exists $K>0$ such for every $p\geq (K\log n)/n$ the following
holds: Let $G_{n,p}$ be a random graph on $n$ vertices with edge
probability $p$, and suppose that its edges are being colored with
$n$ colors uniformly at random. Then, w.h.p\ the resulting graph
contains a Hamilton cycle with for which all the colors appear (a
rainbow Hamilton cycle). Bal and Frieze proved the latter statement
for graphs on an even number of vertices, where for odd $n$ their
$p$ was $\omega((\log n)/n)$. Lastly, we show that for
$p=(1+o(1))(\log n)/n$, if we randomly color the edge set of a
random directed graph $D_{n,p}$ with $(1+o(1))n$ colors, then
w.h.p.\ one can find a rainbow Hamilton cycle where all the edges
are directed in the same way.

\end{abstract}

\section{Introduction}

In this paper we show how to adjust a very nice coupling argument
due to McDiarmid \cite{McD} in order to prove/reprove problems
related to the existence of Hamilton cycles in various random
grpahs/hypergraphs models. The first problem we consider is related
to the existence of a loose Hamilton cycle in a random $k$-uniform
Hypergraph.

A $k$-uniform hypergraph is a pair $\mathcal H=(V,\mathcal E)$,
where $V$ is the set of \emph{vertices} and $\mathcal E\subseteq
\binom{n}{k}$ is the set of \emph{edges}. In the special case where
$k=2$ we simply refer to it as a \emph{graph} and denote it by
$G=(V,E)$. The random $k$-uniform hypergraph $H^{(k)}_{n,p}$ is
defined by adding each possible edge with probability $p$
independently at random, where for the case $k=2$ we denote it by
$G_{n,p}$ (the usual binomial random graph). We define a
\emph{loose} Hamilton cycle as a cyclic ordering of $V$ for which
the edges consist of $k$ consecutive vertices, and for each two
consecutive edges $e_i$ and $e_{i+1}$ we have $|e_i\cap e_{i+1}|=1$
(where we consider $n+1=1$). It is easy to verify that if $n$ is not
divisible by $k-1$ then such a cycle cannot exist.

Frieze \cite{F} and Dudek and Frieze \cite{DF} showed that for
$p=\omega\left(\log n/n\right)$, the random $k$-uniform hypergraph
$H^{(k)}_{n,p}$ w.h.p.\ (with high probability) contains a loose
Hamilton cycle in $H^{(k)}_{n,p}$ whenever $2(k-1) | n$. Formally,
they showed:

\begin{theorem} \label{thm:frieze}The following hold:
\begin{enumerate}[$(a)$]
\item (Frieze) Suppose that $k=3$. Then there exists a constant $c>0$ such
that for $p\geq (c\log n)/n$ the following holds
$$\lim_{4|n\rightarrow \infty}\Pr\left[H^{(3)}_{n,p} \textrm{
contains a loose Hamilton cycle}\right]=1.$$

\item (Dudek and Frieze) Suppose that $k\geq 4$ and that
$pn^{k-1}/\log n$ tends to infinity. Then

$$\lim_{2(k-1)|n\rightarrow \infty}\Pr\left[H^{(k)}_{n,p} \textrm{ contains a loose Hamilton
cycle}\right]=1.$$
\end{enumerate}
\end{theorem}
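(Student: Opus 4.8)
The plan is to view a loose Hamilton cycle as a constrained Hamilton cycle in an auxiliary graph on a set of \emph{connector} vertices, and to use the coupling of McDiarmid~\cite{McD} to transfer its existence from a clean, essentially independent model up to the dependent hypergraph $H^{(k)}_{n,p}$. First I would record the anatomy of the object: a loose Hamilton cycle is a choice of $n/(k-1)$ hyperedges in which two vertices of each edge are designated \emph{connectors} (its endpoints) and the remaining $k-2$ are \emph{middles}, such that the connectors, taken in order, form a cyclic sequence in which consecutive connectors are spanned by a common edge, while the middle vertices are private to a single edge and every vertex is covered with the correct multiplicity. Thus the object factors into two intertwined requirements: a cyclic arrangement of the $n/(k-1)$ connectors in which each consecutive pair is spanned by a hyperedge, and the demand that the middle parts of the chosen hyperedges be pairwise disjoint and tile $V$ minus the connector set.

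The device is to decouple these requirements and gain independence by assigning roles at random and orienting the relevant hyperedges, then invoking McDiarmid's coupling. Concretely I would form an auxiliary bipartite graph whose perfect matchings correspond (via the standard ``split'' construction, in which a perfect matching is a $1$-factor and a single cycle is a directed Hamilton cycle) to the connector arrangement, and I would arrange the middle assignment in the same independent model. The role of the coupling is that the probability that $H^{(k)}_{n,p}$ contains the desired monotone structure is at least the probability that the more independent auxiliary model does, so it suffices to establish the latter w.h.p.

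The threshold analysis for the connector arrangement is the comfortable part. The hypothesis $pn^{k-1}/\log n\to\infty$ (for $k=3$ this is even weaker than Frieze's $p\ge c\log n/n$, so part $(a)$ is the easier regime) says precisely that each vertex lies in $\omega(\log n)$ hyperedges; equivalently each connector is incident, as a designated endpoint, to $\omega(\log n)$ hyperedges whose second endpoints are well spread. This places the auxiliary graph well above the Erd\H{o}s--R\'enyi and Koml\'os--Szemer\'edi thresholds for perfect matchings and Hamilton cycles, and I would verify the needed Hall/expansion condition by the usual union bound ruling out small sets $S$ with $|N(S)|<|S|$: the $\log n$ factor is exactly what sends the probability of an exposed vertex (the case $|S|=1$) to $0$, and a two-regime estimate handles larger $S$. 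I stress that the relevant quantity here is the single-vertex degree, \emph{not} the codegree of a prescribed pair of connectors, which is already $o(1)$ once $k\ge 4$; the cycle need not pass through prescribed pairs, only through enough spread-out neighbours.

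The main obstacle I expect is the second requirement: the hyperedges selected for the connector cycle each carry their own middle part, and these parts must be pairwise disjoint and tile the complement of the connector set. This is not a bare Hamilton cycle but a \emph{constrained}, rainbow-type Hamilton cycle in which each edge wears a label (its middle block) and the labels must partition $V$, and controlling this disjointness while staying above threshold is the delicate point---indeed it is exactly the kind of constraint McDiarmid's coupling is meant to decouple by passing to an independent model. Once a suitable $1$-factor is found it is a disjoint union of directed cycles, and I would merge these into a single spanning cycle by a rotation--extension (booster) argument, using that the supercritical degree leaves many alternative edges available. The divisibility hypothesis $2(k-1)\mid n$ of Theorem~\ref{thm:frieze} comfortably makes $n/(k-1)$ an even integer and removes any parity obstruction in the connector cycle, so no extra bookkeeping is needed here; removing this slack down to $(k-1)\mid n$ is precisely the improvement one would work for afterwards.
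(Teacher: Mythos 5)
This statement is not proved in the paper at all: it is the imported black box, cited to Frieze \cite{F} and Dudek--Frieze \cite{DF}, on which the paper's own arguments (via Lemma~\ref{lemma:colin}) are built. So your sketch must be measured against the original proofs, and it diverges from them at two points where it breaks. First, your central transfer step runs McDiarmid's coupling backwards. The coupling of \cite{McD}, exactly as instantiated in Lemma~\ref{lemma:colin}, shows that the \emph{split} (more independent) model is at least as likely to contain the target structure as the base model: $\Pr[D^{(k)}_{n,p} \textrm{ has a directed loose Hamilton cycle}] \geq \Pr[H^{(k)}_{n,p} \textrm{ has a loose Hamilton cycle}]$. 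Your plan asserts the reverse --- ``the probability that $H^{(k)}_{n,p}$ contains the desired monotone structure is at least the probability that the more independent auxiliary model does'' --- and that inequality is simply not what the coupling provides; it is the wrong direction for any splitting argument of this kind. Moreover, the coupling only decouples \emph{parallel copies} of one underlying ground element (the $k!$ orientations of a single $k$-set, or its colours); it cannot make the middle blocks of \emph{distinct} hyperedges independent, so the global disjointness/tiling constraint you hope it will ``decouple'' lies entirely outside its scope.

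Second, even with a correct coupling, the requirement that the middle blocks be pairwise disjoint and tile the complement of the connector set is a Shamir-type perfect matching problem in a random hypergraph at the optimal density $pn^{k-1}=\omega(\log n)$, and at that density Hall-condition union bounds and rotation--extension arguments do not suffice --- this is precisely why Shamir's problem resisted elementary methods. Both \cite{F} and \cite{DF} resolve it by invoking the Johansson--Kahn--Vu theorem on perfect matchings in random hypergraphs, after splitting the loose Hamilton cycle into two perfect matchings of an auxiliary structure; this is also the true origin of the hypothesis $2(k-1)\mid n$: the cycle has $n/(k-1)$ edges and decomposes into two perfect matchings only when that number is even, so the divisibility is load-bearing in the proof, not the cost-free parity remark you make. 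Two further slips: merging a $1$-factor into a single cycle by rotations would destroy the tiling (each rotation swaps which hyperedges, hence which middle blocks, are used), and your claim that $k=3$ is ``the easier regime'' has it backwards --- with a single middle vertex per edge the $k\geq 4$ reduction of \cite{DF} fails, which is why \cite{F} needs a separate and more delicate argument (with $4\mid n$) at the larger density $p\geq c\log n/n^{2}$. In short, the proposal is missing the key ingredient (Johansson--Kahn--Vu) and substitutes for it a coupling inequality used in the invalid direction.
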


The assumption $2(k-1) | n$ is clearly artificial, and indeed, in
\cite{DFP} Dudek, Frieze, Loh and Speiss removed it and showed
analog statement to \ref{thm:frieze} where there the only
restriction on $n$ is to be divisible by $k-1$ (which is optimal).

As a first result in this paper, we give a very short proof for the
result of Dudek, Frieze, Loh and Speiss while weakening $(a)$ a bit.
Formally, we prove the following theorem:

\begin{theorem} \label{main1}
The following hold:
\begin{enumerate}[$(a)$]
\item Suppose that $k=3$. Then for every $\varepsilon>0$ there exists a constant $c>0$ such
that for $p\geq (c\log n)/n$ the following holds
$$\lim_{2|n\rightarrow \infty}\Pr\left[H^{(3)}_{n,p} \textrm{
contains a loose Hamilton cycle}\right]\geq 1-\varepsilon.$$

\item Suppose that $k\geq 4$ and that
$pn^{k-1}/\log n$ tends to infinity. Then

$$\lim_{(k-1)|n\rightarrow \infty}\Pr\left[H^{(k)}_{n,p} \textrm{ contains a loose Hamilton
cycle}\right]=1.$$
\end{enumerate}
\end{theorem}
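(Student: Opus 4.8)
The plan is to reduce the existence of a loose Hamilton cycle to the existence of a perfect matching in an auxiliary random bipartite graph, and to move this reduction across probability spaces using McDiarmid's coupling. First I would record the combinatorial skeleton of a loose Hamilton cycle: when $(k-1)\mid n$ it has $m=n/(k-1)$ edges, exactly $m$ \emph{link} vertices (lying in two consecutive edges) and $(k-2)m$ \emph{interior} vertices (lying in a single edge). The obvious necessary condition is that every vertex lie in at least one edge, and a short computation shows that already this ``every vertex is covered'' event has threshold of order $\log n/n^{k-1}$, matching the hypothesis $pn^{k-1}/\log n\to\infty$. The content of the theorem is that this coupon-collector threshold is also \emph{sufficient}, which strongly suggests routing the argument through a covering/matching statement whose threshold coincides with the coupon-collector one, rather than through any denser connectivity barrier.

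Concretely, I would build a bipartite graph $\Gamma$ in which one side indexes the ``slots'' of a fixed loose-cycle blueprint and the other side indexes the vertices of $V$, placing an edge whenever the relevant vertex is compatible with its slot given the hyperedges present in $H^{(k)}_{n,p}$; a perfect matching in $\Gamma$ then decodes into an embedding of the blueprint, that is, into a loose Hamilton cycle. The role of McDiarmid's coupling is precisely to compare $H^{(k)}_{n,p}$ with this auxiliary model: since ``containing a loose Hamilton cycle'' is a monotone increasing and isomorphism-invariant property, the coupling transfers a lower bound for the probability that $\Gamma$ admits a suitable perfect matching into a lower bound for $\Pr[H^{(k)}_{n,p}\text{ has a loose Hamilton cycle}]$. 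I would then invoke the classical perfect-matching threshold for random bipartite graphs (equivalently, the min-degree-one threshold), which supplies exactly the regime $pn^{k-1}/\log n\to\infty$.

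The two regimes would then split as follows. For $k\ge 4$ each edge carries $k-2\ge 2$ interior vertices, so interior vertices are abundant and the matching in $\Gamma$ has enormous slack; here the coupling is essentially lossless and one obtains the conclusion with probability tending to $1$ under the stated threshold and under the optimal divisibility $(k-1)\mid n$. For $k=3$ the link and interior vertices are equinumerous, this slack disappears, and the coupling costs a constant factor in $p$ (forcing $p\ge(c\log n)/n$ with $c$ large) while yielding only success probability $\ge 1-\varepsilon$; the payoff is that the argument now runs under the optimal hypothesis $2\mid n$ rather than the artificial $4\mid n$ of Theorem~\ref{thm:frieze}(a).

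The main obstacle I anticipate is connectivity: a perfect matching in $\Gamma$ a priori decodes only into a spanning union of vertex-disjoint loose cycles, not necessarily a single one, so the decoding must be arranged to force connectivity --- for instance by threading a fixed spanning loose path into the blueprint before matching, or by a short rotation/merging step patching several loose cycles into one. Closely tied to this is the exact-divisibility point: removing the factor of $2$ (the case $n\equiv 2\pmod 4$ when $k=3$) is exactly where the parity of the number of link vertices obstructs the clean pairing available when $4\mid n$, and it is here that one must either absorb the $\varepsilon$ loss or insert an ad hoc connector. Checking that the target property satisfies the monotonicity and isomorphism-invariance hypotheses needed for McDiarmid's coupling is routine but should be verified before the coupling is applied.
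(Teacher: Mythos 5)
There is a genuine gap here, and in fact two. First, you misapply McDiarmid's coupling. The coupling (Lemma \ref{lemma:colin} in the paper, and McDiarmid's original argument) compares two probability spaces with a very specific relationship: the same family of edge-slots, where in one model each orientation of a slot is exposed independently and in the other all orientations of a slot are exposed together, and one interpolates slot by slot. It is not a general device for transferring a monotone property between $H^{(k)}_{n,p}$ and an arbitrary auxiliary model. Your bipartite graph $\Gamma$ (slots of a blueprint versus vertices of $V$) has edges that are heavily dependent --- whether a vertex is compatible with a slot depends on hyperedges shared across many slot--vertex pairs --- so $\Gamma$ is not a binomial random bipartite graph, and the classical perfect-matching threshold you want to invoke simply does not apply to it. Second, and more fatally, the obstacle you flag yourself is the entire content of the problem: a perfect matching in $\Gamma$ decodes only into a spanning union of vertex-disjoint loose cycles. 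Your proposed fixes do not work at this density: ``threading a fixed spanning loose path into the blueprint'' presupposes the structure you are trying to build, and rotation/merging machinery is a dense-graph technique with no known analogue for loose cycles at the coupon-collector threshold $p\sim\log n/n^{k-1}$. Forcing connectivity is precisely the hard step in Frieze \cite{F} and Dudek--Frieze \cite{DF}, and it is where their artificial divisibility condition $2(k-1)\mid n$ enters; your sketch leaves exactly this step unproved, and additionally makes no use of Theorem \ref{thm:frieze}, so it amounts to an attempt to reprove those results from scratch rather than to close the divisibility gap.

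For contrast, the paper's proof is a short \emph{reduction} to the known $2(k-1)\mid n$ case. Writing $H^{(k)}_{n,p}$ as a union of $H_0=H^{(k)}_{n,p/2}$ and $f(n)$ independent rounds that each behave like a directed hypergraph $D^{(k)}_{n,q}$, one picks an edge $e^*$ of $H_0$, contracts it to a single auxiliary vertex, and observes that the resulting vertex set has size $n-(k-1)$, which \emph{is} divisible by $2(k-1)$ when $(k-1)\mid n$ but $2(k-1)\nmid n$. The directed model is exactly what makes the contraction sound: by admitting only arcs that meet $x_1$ not in first position or meet $x_k$ in first position, a directed loose Hamilton cycle through the contracted vertex, with that vertex placed at an endpoint of its two incident edges, expands back to a genuine loose Hamilton cycle; Lemma \ref{lemma:colin} then imports Theorem \ref{thm:frieze} into the directed setting. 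The endpoint condition holds with probability $2/k$ by symmetry, so $f(n)$ independent rounds fail with probability $(1-2/k)^{f(n)}$. This also explains the true source of the $k=3$ versus $k\geq 4$ dichotomy, which is not your ``equinumerous link and interior vertices'' heuristic: for $k\geq 4$ the hypothesis $pn^{k-1}/\log n\to\infty$ leaves room to split $p$ into $f(n)\to\infty$ rounds each still above threshold, giving a full w.h.p.\ statement, whereas for $k=3$ Frieze's theorem needs $q\geq (c'\log n)/n$ per round, so only a constant number $f$ of rounds fits inside $p=(c\log n)/n$, leaving failure probability $(1/3)^{f}\leq\varepsilon$ --- hence the $1-\varepsilon$ in part $(a)$.
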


Another problem we handle with is the problem of finding a
\emph{rainbow} Hamilton cycle in a randomly edge-colored random
graph. For an integer $c$, let us denote by $G^{c}_{n,p}$ the random
graph $G_{n,p}$, where each of its edges is being colored, uniformly
at random with a color from $[c]$. A Hamilton cycle in $G^c_{n,p}$
is called \emph{rainbow} if all its edges receive distinct colors.
Clearly, a rainbow Hamilton cycle can not exists whenever $c<n$. Bal
and Frieze \cite{BF} showed that for some constant $K>0$, if $p\geq
(K\log n)/n$, the $G^n(n,p)$ w.h.p.\ contains a rainbow Hamilton
cycle, provided that $n$ is even. For the odd case, they proved
similar statement but for $p=\omega( (\log n)/n)$. We overcome this
and show the following:

\begin{theorem}\label{main2}
There exists a constant $K>0$ such that $G^n_{n,p}$ w.h.p.\ contains
a rainbow Hamilton cycle.
\end{theorem}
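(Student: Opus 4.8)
The plan is to remove the parity obstruction present in the argument of Bal and Frieze \cite{BF} by passing to the \emph{directed} setting via McDiarmid's coupling \cite{McD}, where Hamiltonicity is insensitive to the parity of $n$. Let $D^n_{n,p}$ denote the random digraph on $[n]$ in which each of the $n(n-1)$ possible arcs appears independently with probability $p$, and every present arc receives a color from $[n]$ uniformly and independently at random; call a directed Hamilton cycle in $D^n_{n,p}$ \emph{rainbow} if its $n$ arcs get pairwise distinct colors. First I would record the colored version of McDiarmid's coupling: one can couple $G^n_{n,p}$ and $D^n_{n,p}$ so that every arc of $D^n_{n,p}$ corresponds to an edge of $G^n_{n,p}$ carrying the same color. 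Since forgetting orientations sends a rainbow directed Hamilton cycle to a rainbow undirected Hamilton cycle on the same $n$ edges with the same $n$ colors, the coupling yields
\[
\Pr\!\left[G^n_{n,p}\text{ has a rainbow Hamilton cycle}\right]\ \ge\ \Pr\!\left[D^n_{n,p}\text{ has a rainbow directed Hamilton cycle}\right].
\]
Thus it suffices to show that the right-hand side tends to $1$ for $p\ge (K\log n)/n$ with $K$ a sufficiently large constant, and this statement carries no restriction on the parity of $n$, which is exactly the source of the difficulty in \cite{BF}.

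Next I would analyze $D^n_{n,p}$ through the standard bipartite reduction. Replace each vertex $v$ by an out-copy $v^{+}$ and an in-copy $v^{-}$, and associate to $D^n_{n,p}$ the bipartite graph $B$ on $\{v^{+}\}\cup\{v^{-}\}$ in which $v^{+}u^{-}$ is an edge (inheriting the color of the arc) precisely when $(v,u)$ is an arc of $D^n_{n,p}$. A perfect matching in $B$ corresponds to a spanning union of vertex-disjoint directed cycles, and its edge colors are exactly the arc colors; crucially, the existence and rainbow-ness of such a matching is a bipartite phenomenon, hence unaffected by the parity of $n$. For $p\ge(K\log n)/n$ the graph $B$ is, up to relabeling, a randomly colored random bipartite graph comfortably above its connectivity threshold, and I would invoke the rainbow-matching machinery of \cite{BF} — whose bipartite core is exactly the part that does apply uniformly in $n$ — to produce, w.h.p., a \emph{rainbow} spanning union of directed cycles using all $n$ colors.

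The main obstacle is the final step: upgrading a rainbow spanning union of directed cycles into a single rainbow directed Hamilton cycle. Because there are exactly $n$ colors for the $n$ arcs, the rainbow condition is perfectly tight, so the usual cycle-merging operations — which reroute a few arcs to splice two cycles together — must be carried out without ever repeating a color, which is what makes the exactly-$n$-colors regime genuinely harder than the $(1+o(1))n$-color regime available in the directed companion result. I would handle this by a rotation--extension and absorption scheme in which, whenever two cycles are to be merged, the replacement arcs are drawn from a reservoir guaranteed, by the density $p=\Theta(\log n/n)$ and the freedom afforded by the large constant $K$, to contain arcs whose colors are currently unused or can be exchanged along an augmenting alternating structure. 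The slack between the bound $p\ge (K\log n)/n$ used here and the sharp bound $p=(1+o(1))(\log n)/n$ is precisely what creates enough spare colored arcs to perform these merges; quantifying that each merge succeeds with failure probability $o(1/n)$ and that only $O(\log n)$ merges are needed, so that a union bound closes the argument, is the technical heart of the proof.
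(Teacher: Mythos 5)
Your opening step is where the argument breaks: the coupling inequality is stated in the wrong direction. McDiarmid's argument (Lemma \ref{lem:rainDirected} in the paper) gives
$\Pr\left[D^{c}_{n,p} \text{ has a rainbow directed Hamilton cycle}\right]\geq \Pr\left[G^{c}_{n,p}\text{ has a rainbow Hamilton cycle}\right]$;
that is, the \emph{directed} model is the easier one, because each unordered pair is replaced by independently present oriented copies. The coupling you posit --- $G^n_{n,p}$ and $D^n_{n,p}$ at the \emph{same} density $p$, with every arc of $D^n_{n,p}$ sitting on an edge of $G^n_{n,p}$ of the same color --- does not exist: if the $n(n-1)$ arcs are independent, each present with probability $p$, and every present arc forces its underlying edge, then edges appear with probability $1-(1-p)^2>p$, so the undirected marginal is not $G_{n,p}$; and the colors cannot be matched either, since independently colored opposite arcs $(u,v)$ and $(v,u)$ cannot both agree with the single color of the edge $uv$. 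Hence the inequality $\Pr[G^n_{n,p}\text{ rainbow HC}]\geq \Pr[D^n_{n,p}\text{ rainbow directed HC}]$ is exactly the reverse of what clutter percolation provides, and with it the reduction ``it suffices to prove the directed statement'' collapses. Even granting that reduction, your directed endgame is only a sketch: with exactly $n$ colors for $n$ arcs, every cycle-merging step must replace two deleted arcs by two arcs carrying \emph{precisely} the two deleted colors, and you give no argument that such arcs exist w.h.p.; this is the full difficulty of \cite{BF}, not a routine rotation--extension add-on, so the ``technical heart'' you defer is the entire theorem.

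For contrast, the paper removes the parity obstruction without ever needing the false direction of the coupling. It assumes $n$ odd, exposes $G^n_{n,p}$ in two rounds, picks a single edge $e^*=\{x,y\}$ of some color $c_1$ from the sparse first round, and contracts it. The contracted instance lives on $n-1$ (even) vertices with the $n-1$ remaining colors, and the directed model enters for a reason different from yours: the orientation of arcs at the contracted vertex encodes whether a cycle-edge attaches to $x$ or to $y$, so a rainbow \emph{directed} Hamilton cycle in $D^{n-1}_{n-1,s}$ lifts to a rainbow Hamilton cycle of $G^n_{n,p}$ passing through $e^*$. The even-case directed statement invoked there is Corollary \ref{1}, i.e., Bal--Frieze combined with McDiarmid's inequality used in its correct direction. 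To salvage your plan you would have to either prove the directed rainbow theorem for odd $n$ from scratch and then find a legitimate route back to the undirected model, or adopt a parity-repairing reduction such as the contraction above.
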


It is well known (see e.g. \cite{JLR}) that a Hamilton cycle appear
(w.h.p.) in $G_{n,p}$ for $p\approx (\log n)/n$. Therefore, one
would expect to prove an analog for Theorem \ref{main2} in this
range of $p$. However, it is easy to see that in this range, while
randomly color the edges of $G_{n,p}$ with $n$ colors, w.h.p. not
all the colors appear. Frieze and Loh \cite{FL} proved that for
$p=(1+\varepsilon)(\log n)/n$ and for $c=n+\Theta(n/\log\log n)$, a
graph $G^c_{n,p}$ w.h.p.\ contains a rainbow Hamilton cycle. It is
thus natural to consider the same problem for a randomly
edge-colored directed random graph, denoted by $D^c_{n,p}$ (we allow
edges to go in both directions). Note that in directed graphs we
require to have a directed Hamilton cycle, which is a Hamilton cycle
with all arcs pointing to the same direction.

The following theorem will follow quite immediately:

\begin{theorem}\label{main3}
Let $p=(1+\varepsilon)(\log n)/n$ and let $c=n+\Theta(n/\log\log
n)$. Then $D^c_{n,p}$ w.h.p.\ contains a rainbow Hamilton cycle.
\end{theorem}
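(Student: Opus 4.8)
The plan is to leverage the undirected rainbow result (Theorem~\ref{main2}, or rather the Frieze--Loh theorem quoted just before the statement for this range of $p$) together with the McDiarmid coupling philosophy that drives the whole paper. The key observation is that a random directed graph $D_{n,p}$ carries strictly more information than an undirected $G_{n,p}$ at the same density: one can couple the two so that the edge set of $D_{n,p}$ projects onto the edge set of an undirected $G_{n,p'}$, where the underlying undirected edge $\{u,v\}$ is present in $G$ whenever at least one of the two arcs $(u,v),(v,u)$ appears in $D$. Thus $p' = 2p-p^2 = (1+o(1))2p$, but more usefully, for the relevant range each undirected edge present in $G$ independently receives an orientation, and with probability bounded below it can be oriented in whichever direction we eventually need.

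First I would set up the coupling precisely. Expose $D^c_{n,p}$ and let $H$ be the undirected graph on the same vertex set obtained by forgetting orientations (keeping the color of an arbitrarily chosen arc when both are present). Since $p = (1+\varepsilon)(\log n)/n$, the undirected projection $H$ is distributed as (and stochastically dominates) $G_{n,p}$ with the same density, and we color its edges with the same palette of $c = n+\Theta(n/\log\log n)$ colors, so $H$ is exactly the object to which the Frieze--Loh theorem applies. Hence w.h.p.\ $H$ contains a rainbow Hamilton cycle $C$ in the undirected sense.

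Second, I would upgrade this undirected rainbow cycle to a \emph{directed} one. A cyclic traversal of $C$ fixes, for each of its $n$ edges $\{u,v\}$, a preferred direction, say $(u,v)$. The issue is that the arc realizing $\{u,v\}$ in $D$ might point the wrong way. Here I would use the fact that in $D_{n,p}$ each of the two arcs is present independently with probability $p$, so conditioned on $\{u,v\}\in H$ the probability that the desired orientation is available is bounded below by a constant (roughly $1/(2-p)\to 1/2$, and both directions are present with probability $\sim p$). I would therefore build the rainbow directed Hamilton cycle not from a single fixed $C$ but within the rich family of rainbow Hamilton cycles guaranteed by the robustness of the undirected argument: either by running the absorption/rotation--extension machinery underlying the Frieze--Loh proof on the sub-digraph of correctly oriented arcs, or, in the spirit of this paper, by coupling $D^c_{n,p}$ with a \emph{second} independent undirected colored graph so that McDiarmid's argument transfers the existence of a rainbow directed structure directly.

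The hard part will be exactly this orientation step, because fixing a single undirected rainbow cycle and then asking its arcs to be correctly oriented fails with overwhelming probability (each edge is correct with constant probability, so all $n$ being simultaneously correct is exponentially unlikely). The resolution—and the reason the theorem ``follows quite immediately'' once the coupling is in place—is that one should not condition on the orientations after choosing the cycle, but rather feed the correctly oriented arcs as the available edge set into the same existence argument: set $p = (1+\varepsilon)(\log n)/n$ so that after orienting, the digraph of usable arcs still has in-/out-degrees concentrated around $(1+\varepsilon)\log n$, which remains above the sharp threshold $\log n$ for directed Hamiltonicity. Thus the genuine content is to check that the density loss incurred by demanding a fixed orientation (a factor of roughly $2$, absorbed by the constant $1+\varepsilon$ and the directed setting) still leaves enough room, and that the color palette $c=n+\Theta(n/\log\log n)$ is large enough that the rainbow constraint survives the restriction to one orientation class; both are routine once the coupling reduction to the undirected Frieze--Loh theorem is made explicit.
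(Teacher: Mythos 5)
Your diagnosis of the obstacle is correct, but your proposal never actually clears it, and the route you set up is the wrong direction of coupling. You project $D^c_{n,p}$ down to its undirected shadow $H$ (which, incidentally, is distributed as $G_{n,2p-p^2}$ and merely \emph{dominates} $G_{n,p}$ --- it is not ``the same density'' as you write), extract an undirected rainbow Hamilton cycle via Frieze--Loh, and then must orient it. You rightly observe that a fixed cycle orients correctly only with exponentially small probability, but neither of your proposed fixes is a proof. ``Feed the correctly oriented arcs as the available edge set into the same existence argument'' is circular: there is no notion of ``correctly oriented'' before a target cycle is chosen, and the set of arcs usable for \emph{some} directed Hamilton cycle is the whole digraph. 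The alternative --- rerunning the Frieze--Loh absorption/rotation machinery inside the digraph --- would amount to reproving their theorem in the directed setting, a substantial undertaking rather than a routine check; and your appeal to the threshold $\log n$ for directed Hamiltonicity ignores the rainbow constraint, which is precisely where the difficulty of \cite{FL} lies. Moreover, once a cycle has been selected by an algorithm that explored $H$, the conditional distribution of the orientations along it is exactly the exponentially unlikely event you yourself ruled out, so there is no ``density loss of a factor $2$ absorbed by $1+\varepsilon$'' argument available.

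The paper sidesteps all of this by running McDiarmid's coupling in the \emph{opposite} direction, which is exactly Lemma \ref{lem:rainDirected}: for the same $p$ and palette $c$, $\Pr\left[D^{c}_{n,p} \textrm{ has a rainbow directed Hamilton cycle}\right]\geq \Pr\left[G^{c}_{n,p} \textrm{ has a rainbow Hamilton cycle}\right]$. One replaces, one pair at a time, the single coin for $\{u,v\}$ (which when it lands heads makes both orientations available) by two independent coins, one per orientation; conditioned on everything else, if at least one orientation of the current pair would complete a rainbow directed Hamilton cycle, success has probability at least $p$ in both models, so the probability of the directed event never decreases along the interpolation. Thus the directed model at probability $p$ is at least as good as the undirected model at the \emph{same} $p$ --- no orientation step, no factor-of-two loss --- and Theorem \ref{main3} is then literally Frieze--Loh \cite{FL} plus this one lemma. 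Your parenthetical suggestion of ``coupling with a second independent undirected colored graph so that McDiarmid's argument transfers the existence directly'' gestures at this, but since you never formulate or use that coupling and instead pursue the project-then-orient plan, your argument as written has a genuine gap at the orientation step.
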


Our proof is based on a very nice coupling argument due to McDiarmid
\cite{McD} and on Theorem \ref{thm:frieze}.

\section{Auxiliary results}

In this section we present some variants of a very nice argument by
McDiarmid \cite{McD}. For the convenient of the reader we add a
proof for one of them, and the rest will be left as easy exercises.
Before stating our lemmas, let us define the \emph{directed} random
$k$-uniform hypergraph $D^{(k)}_{n,p}$ in the following way. Each
ordered $k$-tuple $(x_1,\ldots,x_k)$ consisting of $k$ distinct
elements of $[n]$ appears as an \emph{arc} with probability $p$,
independently at random. In the special case where $k=2$ we simply
write $D_{n,p}$. A \emph{directed loose Hamilton cycle} is a loose
Hamilton cycle where consecutive vertices are now arcs of
$D^{(k)}_{n,p}$ and the last vertex of every are is the first of the
consecutive one. In the following lemma we show that the probability
for $D^{(k)}_{n,p}$ to have a directed loose Hamilton cycle is lower
bounded by the probability for $H^{(k)}_{n,p}$ to have one.

\begin{lemma} \label{lemma:colin}
Let $k\geq 3$. Then, for every $p:=p(n)\in (0,1)$ we have
$$\Pr\left[D^{(k)}_{n,p} \textrm{ contains a directed loose
Hamilton cycle}\right]\geq \Pr\left[H^{(k)}_{n,p} \textrm{ contains
a loose Hamilton cycle}\right].$$
\end{lemma}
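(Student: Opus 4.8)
The plan is to realize McDiarmid's coupling between $D^{(k)}_{n,p}$ and $H^{(k)}_{n,p}$ on a common probability space so that a loose Hamilton cycle in the undirected model is \emph{witnessed} by a directed one in the directed model, and then read the inequality off the coupling. First I would generate the directed hypergraph $D:=D^{(k)}_{n,p}$ together with an independent uniformly random permutation $\pi$ of $[n]$, and use $\pi$ to project $D$ onto an undirected hypergraph $H$ on $[n]$ whose edges are independent with the correct marginal $p$. The role of $\pi$ is to supply a globally consistent \emph{reference orientation}: for each $k$-set $S$ it singles out an ordered pair of endpoints, and one declares $S$ to be an edge of $H$ exactly when a corresponding arc is present in $D$. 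Since this couples each edge-indicator to a single Bernoulli$(p)$ variable, and distinct sets use disjoint arc-coordinates, it is immediate that $H\sim H^{(k)}_{n,p}$ for every fixed $\pi$, hence unconditionally.

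The engine of the argument is an elementary counting observation, and it is here that the hypothesis $k\ge 3$ (guaranteeing at least one \emph{internal} vertex per edge) enters. Fix a $k$-set $S$ and any ordered pair $(a,b)$ of distinct vertices of $S$ that we wish to use as the first and last vertices of an arc; the remaining $k-2$ vertices may be placed in between in any order, so there are $(k-2)!\ge 1$ admissible arcs, and $D$ contains at least one of them with probability $1-(1-p)^{(k-2)!}\ge p$. Thus every edge carries, with probability at least $p$, an arc realizing \emph{any prescribed} pair of endpoints, the internal vertices being free to absorb the ordering. Consequently, for a \emph{fixed} loose Hamilton cycle $C$ with edges $e_1,\dots,e_m$ and connecting vertices $c_i=e_i\cap e_{i+1}$, once a direction of traversal is chosen the required pairs $(c_{i-1},c_i)$ are determined, the events ``$e_i$ carries a $c_{i-1}\!\to\! c_i$ arc'' are independent across $i$ (the $e_i$ meet pairwise in at most one vertex, so they live on distinct arc-coordinates), and therefore
\[
\Pr\big[D\ \text{contains a directed copy of}\ C\big]\ \ge\ \big(1-(1-p)^{(k-2)!}\big)^{m}\ \ge\ p^{m}\ =\ \Pr\big[C\subseteq H^{(k)}_{n,p}\big].
\]
This already yields the inequality cycle by cycle, and it is the quantitative heart of the matter.

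To finish, I would upgrade this per-cycle domination to a statement about \emph{existence}: the coupling must be arranged so that whenever $H$ contains \emph{some} loose Hamilton cycle, $D$ contains \emph{some} directed one. The intended mechanism is to orient the cycle found in $H$ along the direction dictated by $\pi$, so that on each edge $e_i$ the endpoint pair $(c_{i-1},c_i)$ one must realize is exactly the one for which the coupling has already secured an admissible arc of $D$; chaining these arcs around the cycle---each ending at the connector where the next begins, with the $k-2$ internal vertices slotted in between---produces a directed loose Hamilton cycle of $D$.

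The hard part, and the step requiring the most care, is precisely this synchronization. The orientation a given edge must realize is \emph{cycle-dependent}: the same $k$-set can appear as an edge with different connecting pairs in different cycles, whereas the coupling is fixed in advance. A single edge, present with probability only $p$, cannot simultaneously carry admissible arcs for every endpoint pair it might ever be asked to realize, so any naive ``orient each set once and for all'' coupling is doomed, and some genuinely global device is unavoidable. The purpose of the random permutation $\pi$ is exactly to break this circularity by committing, independently of the eventual cycle, to a single orientation per edge and thereby reducing the demand on each edge to the one pair for which the abundance $1-(1-p)^{(k-2)!}\ge p$ already pays for the factor $p$. The crux of the proof is to verify that this synchronized coupling simultaneously (i) reproduces $H^{(k)}_{n,p}$ exactly and (ii) forces a directed loose Hamilton cycle in $D$ on the event that $H$ is loosely Hamiltonian; I expect establishing (ii) to be the delicate point, and the hypothesis $k\ge 3$ to be indispensable there, since it is the internal vertices that give each edge the freedom to be traversed in the orientation the cycle demands.
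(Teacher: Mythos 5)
Your per-cycle bound is correct: for a fixed loose Hamilton cycle $C$ with $m$ edges and prescribed connectors, the $m$ required arcs live on distinct $k$-sets, hence are independent, and each appears with probability $1-(1-p)^{(k-2)!}\geq p$, so indeed $\Pr[\vec{C}\subseteq D]\geq p^m=\Pr[C\subseteq H]$. But this fixed-cycle domination does not prove the lemma, which compares \emph{existence} probabilities over exponentially many correlated cycles, and the step you defer to the end --- arranging the coupling so that ``$H$ is loosely Hamiltonian'' implies ``$D$ contains a directed loose Hamilton cycle'' almost surely --- is exactly where the proposal breaks. In your coupling, on the event that $H$ contains a loose Hamilton cycle $C$ with connectors $c_1,\dots,c_m$, the only arcs of $D$ whose presence is guaranteed are the $\pi$-reference arcs of the edges $e_i$; for these to chain into a directed cycle, each $c_i$ would have to be simultaneously the terminal vertex of the reference arc of $e_i$ and the initial vertex of the reference arc of $e_{i+1}$. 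If the reference arc lists each set in increasing $\pi$-order, this would force $\pi$ to increase strictly around a closed cycle, which is impossible; for any other cycle-independent assignment the endpoint pairs generally fail to match the cycle-dependent connectors, since the same $k$-set occurs in many cycles with different connector pairs --- an obstruction you state yourself. The random permutation is precisely an ``orient each set once and for all'' device, so it does not break the circularity you identify, and your claim (ii) is false for this coupling; nothing in the write-up repairs it.

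The paper (following McDiarmid) avoids global synchronization altogether by interpolating one coordinate at a time: in the hybrid $\Gamma_i$, the first $i$ sets carry $k!$ independent Bernoulli$(p)$ orientations while the remaining sets carry all-or-nothing bundles, and one compares $\Gamma_{i-1}$ with $\Gamma_i$ \emph{after conditioning on all coordinates except those of $e_i$}. The only nontrivial case is when some orientation of $e_i$ would complete a directed loose Hamilton cycle; there the conditional success probability is exactly $p$ in the bundled model and at least $1-(1-p)^t\geq p$ (with $t\geq 1$ useful orientations) in the independent model, so Hamiltonicity probability is monotone along the chain and the inequality for \emph{existence} falls out with no union bound and no consistency requirement among edges. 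This is your same elementary inequality $1-(1-p)^t\geq p$, but deployed locally under conditioning, which is what makes the quantification over all cycles come for free. Note also that the paper's argument never uses internal vertices: it needs only $t\geq 1$, not $(k-2)!\geq 1$, so your expectation that $k\geq 3$ is ``indispensable'' is an artifact of the endpoint-matching problem your coupling creates rather than a genuine ingredient of the lemma.
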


\begin{proof} (McDiarmid)
Let us define the following sequence of random directed hypergraphs
$\Gamma_0,\Gamma_1,\ldots,\Gamma_N$, where $N=\binom{n}{k}$ in the
following way: Let $e_1,\ldots,e_N$ be an arbitrary enumeration of
all the (unordered) $k$-tuples contained in $[n]$. For each $e_i$
one can define $k!$ different orientations. Now, in $\Gamma_i$, for
every $j\leq i$ and for each of the $k!$ possible orderings of
$e_j$, we add the corresponding arc with probability $p$,
independently at random. For every $j>i$, we include all possible
orderings of $e_j$ or none with probability $p$, independently at
random. Note that $\Gamma_0$ is $H^{(k)}_{n,p}$ while $\Gamma_N$ is
$D^{(k)}_{n,p}$. Therefore, in order to complete the proof it is
enough to show that
\begin{align*}
&\Pr\left[\Gamma_i \textrm{ contains a directed loose Ham.\
cycle}\right]\geq \Pr\left[\Gamma_{i-1} \textrm{ contains a directed
loose Ham.\ cycle}\right].
\end{align*}

To this end, assume we exposed all arcs but those coming from $e_i$.
There are three possible scenarios:
\begin{enumerate}[$(a)$]
\item $\Gamma_{i-1}$ contains a directed loose Hamilton cycle without
considering $e_i$, or
\item $\Gamma_{i-1}$ does not contain a directed loose Hamilton
cycle even if we add all possible orderings of $e_i$, or
\item $\Gamma_{i-1}$ contains a directed loose Hamilton cycle using at
least one of the orderings of $e_i$.
\end{enumerate}

Note that in $(a)$ and $(b)$ there is nothing to prove. In case
$(c)$, the probability for $\Gamma_{i-1}$ to have a directed loose
Hamilton cycle is $p$, where the probability for $\Gamma_{i}$ to
have such a cycle is at least $p$. This completes the proof of the
lemma.
\end{proof}

In the second lemma, we show that given an integer $c$, one can
lower bound the probability of $D^c_{n,p}$ to have a rainbow
directed Hamilton cycle by the probability of $G^c_{n,p}$ to have
such a cycle.

\begin{lemma}\label{lem:rainDirected}
Let $c$ be a positive integer. Then, for every $p:=p(n)\in (0,1)$ we
have
$$\Pr\left[D^{c}_{n,p} \textrm{ contains a rainbow directed
Hamilton cycle}\right]\geq \Pr\left[G^{c}_{n,p} \textrm{ contains a
rainbow Hamilton cycle}\right].$$
\end{lemma}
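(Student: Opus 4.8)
The plan is to mimic the coupling argument from Lemma \ref{lemma:colin} almost verbatim, but now tracking the random coloring alongside the random orientations. The key observation is that a rainbow (undirected) Hamilton cycle in $G^c_{n,p}$ and a rainbow directed Hamilton cycle in $D^c_{n,p}$ both care only about which edges are present and what color each present edge receives; the directedness is extra freedom that can only help.

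First I would set up the interpolating sequence $\Gamma_0,\ldots,\Gamma_N$ of randomly edge-colored directed graphs exactly as before, where $N=\binom{n}{2}$ and $e_1,\ldots,e_N$ is an enumeration of the unordered pairs. For $j\le i$, each of the two orientations of $e_j$ is included independently with probability $p$, and each included arc gets an independent uniform color from $[c]$. For $j>i$, we flip a single coin of probability $p$: with probability $p$ we include \emph{both} orientations of $e_j$ and assign them a \emph{common} uniform color from $[c]$, and with probability $1-p$ we include neither. One checks that $\Gamma_0$ is distributed as $G^c_{n,p}$ (each unordered edge present with probability $p$ and singly colored, the two stored arcs being merely bookkeeping with identical color) and that $\Gamma_N$ is distributed as $D^c_{n,p}$. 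Hence it suffices to prove the single-step inequality
\[
\Pr\left[\Gamma_i \text{ has a rainbow directed Ham.\ cycle}\right]\ge \Pr\left[\Gamma_{i-1} \text{ has a rainbow directed Ham.\ cycle}\right].
\]

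To prove the step, I would condition on everything except the randomness attached to $e_i$, and split into the same three cases as in Lemma \ref{lemma:colin}: either a rainbow directed Hamilton cycle already exists avoiding $e_i$, or none can exist even using $e_i$ in any orientation with any color, or every rainbow directed Hamilton cycle must use $e_i$. In the first two cases both probabilities agree, so there is nothing to prove. In the decisive third case, under the $\Gamma_{i-1}$ coupling the pair $e_i$ is present with probability $p$ and then receives a single uniform color, so the conditional success probability is $p\cdot\Pr[\text{that color completes a rainbow cycle}]$; under the $\Gamma_i$ coupling the two oriented copies of $e_i$ are present and colored independently, giving strictly more ways to both orient and color $e_i$ so as to finish a rainbow directed cycle. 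A short counting comparison shows the $\Gamma_i$ conditional probability is at least the $\Gamma_{i-1}$ one.

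The main obstacle I anticipate is the bookkeeping in that third case, since unlike the uncolored version of Lemma \ref{lemma:colin} the success event is not simply ``some orientation of $e_i$ is present'' but ``some orientation of $e_i$ is present \emph{and} its color is rainbow-compatible with the rest of the cycle.'' The cleanest way around this is to couple the two processes directly on the single coin that decides presence of $e_i$: conditioned on $e_i$ being present (probability $p$ in both models), in $\Gamma_{i-1}$ we draw one orientation-blind uniform color and try to extend it to a rainbow directed cycle via some valid orientation, whereas in $\Gamma_i$ we independently draw colors for the two orientations and have the union of these options available. Since $\Gamma_i$ has access to at least every (orientation, color) pair that $\Gamma_{i-1}$ does, a pointwise domination on the conditioned probability space yields the inequality, and summing the telescoping steps completes the proof.
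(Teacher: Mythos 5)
Your interpolation setup and the identification of $\Gamma_0$ with $G^c_{n,p}$ (both orientations stored with a common color) are correct, and cases (a) and (b) are fine, but the decisive case (c) contains a genuine error: the claimed pointwise domination is false. Condition on everything outside $e_i=\{u,v\}$, and let $S_1$ (resp.\ $S_2$) be the set of colors $\chi$ such that the arc $u\to v$ (resp.\ $v\to u$), if present with color $\chi$, completes a rainbow directed Hamilton cycle; write $s_m=|S_m|/c$ and $s_{12}=|S_1\cap S_2|/c$. In $\Gamma_{i-1}$ the single coin and the \emph{common} color give conditional success probability $p\,|S_1\cup S_2|/c=p(s_1+s_2-s_{12})$, while in $\Gamma_i$ the two independent arcs give $1-(1-ps_1)(1-ps_2)=p(s_1+s_2)-p^2s_1s_2$. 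Your step inequality is therefore equivalent to $s_{12}\ge p\,s_1s_2$, which fails whenever $S_1$ and $S_2$ are nonempty and disjoint---and such conditionings genuinely occur once earlier pairs are in ``directed mode''. Concretely, take $n=3$, $c=3$, enumerate so that $e_i=\{u,v\}$ comes last, and condition on exactly the arcs $v\to w$ (color $1$), $w\to u$ (color $2$), $u\to w$ (color $1$), $w\to v$ (color $3$) being present (possible only in directed mode, consistently with the conditioning): then $S_1=\{3\}$, $S_2=\{2\}$, we are in case (c) since every Hamilton cycle on three vertices uses the pair $\{u,v\}$, and the conditional difference is $p(s_{12}-ps_1s_2)=-p^2/9<0$. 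The intuition that ``$\Gamma_i$ has access to at least every (orientation, color) pair'' is exactly what breaks: in $\Gamma_{i-1}$ the one color does double duty, hitting the union $S_1\cup S_2$ with a single uniform draw at price $p$, whereas $\Gamma_i$ must pay $p$ separately per orientation; extra independence is not automatically an advantage for rainbow events.

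Your fallback coupling in the last paragraph does not repair this. First, there is no single coin deciding presence of $e_i$ in $\Gamma_i$: the two orientations are independent with probability $p$ each, so ``$e_i$ present'' has probability $1-(1-p)^2\neq p$. Second, even if one artificially shares the presence coin and only splits the colors, the comparison reduces to $1-(1-s_1)(1-s_2)\ge s_1+s_2-s_{12}$, i.e.\ again $s_{12}\ge s_1s_2$, refuted by the same configuration ($5/9<2/3$ there). Note the contrast with the uncolored Lemma \ref{lemma:colin}, where case (c) only needs $1-(1-p)^m\ge p$ for $m\ge 1$ completing orientations and no color condition intervenes; the paper states the colored lemma without proof (``left as easy exercises''), and your proposal is the natural adaptation of that template, but the adaptation does not go through as written: the per-conditioning inequality is simply false, so closing case (c) requires an additional idea---for instance an argument that the defect $p(s_{12}-ps_1s_2)$ is nonnegative after averaging over the conditioning, or a different decomposition of the coloring randomness---which your write-up does not supply.
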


Note that by combining the result of Bal and Frieze \cite{BF} with
Lemma \ref{lem:rainDirected} we immediately obtain the following
corollary:

\begin{corollary}\label{1}
There exists a constant $K>0$ such that for every $p\geq (K\log
n)/n$ we have
$$\Pr\left[D^n_{n,p} \textrm{ contains a rainbow Hamilton
cycle}\right]=1,$$ provided that $n$ is even.
\end{corollary}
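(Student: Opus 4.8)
The plan is simply to chain together the two ingredients that precede the statement: the Bal--Frieze theorem and the coupling inequality of Lemma \ref{lem:rainDirected}. First I would invoke the result of Bal and Frieze \cite{BF}, which furnishes a constant $K>0$ such that for every $p\geq (K\log n)/n$ and every even $n$, the randomly edge-colored graph $G^n_{n,p}$ contains a rainbow Hamilton cycle with high probability; that is, the probability tends to $1$ as $n\to\infty$ along even values. This is the only external input, and it supplies precisely the constant $K$ appearing in the corollary.

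Second, I would apply Lemma \ref{lem:rainDirected} with $c=n$. The lemma asserts the pointwise inequality
$$\Pr\left[D^n_{n,p}\textrm{ contains a rainbow directed Hamilton cycle}\right]\geq \Pr\left[G^n_{n,p}\textrm{ contains a rainbow Hamilton cycle}\right],$$
valid for every $p\in(0,1)$. Since for $p\geq (K\log n)/n$ with $n$ even the right-hand side tends to $1$ by the previous paragraph, the left-hand side is squeezed up to $1$ as well. This is exactly the claimed conclusion for $D^n_{n,p}$, where, as throughout, ``rainbow Hamilton cycle'' in the directed setting is understood to mean a rainbow \emph{directed} Hamilton cycle, all of whose arcs point the same way.

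There is essentially no obstacle to overcome in this step: all of the difficulty has already been absorbed into the monotone coupling behind Lemma \ref{lem:rainDirected} (whose proof mirrors that of Lemma \ref{lemma:colin}, exposing one undirected color class at a time) and into the Bal--Frieze theorem itself. The only point meriting a word of care is the reading of the displayed probability: the statement should be understood in the w.h.p.\ sense, i.e.\ probability $1-o(1)$ along even $n$, rather than as an exact equality to $1$ for finite $n$, and the coupling of Lemma \ref{lem:rainDirected} transfers this asymptotic lower bound verbatim from the undirected to the directed model.
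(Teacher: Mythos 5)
Your proposal is correct and matches the paper exactly: the corollary is stated there as an immediate consequence of combining the Bal--Frieze result with Lemma \ref{lem:rainDirected}, which is precisely your chaining argument. Your remark that the displayed probability should be read in the w.h.p.\ sense (i.e.\ $1-o(1)$ along even $n$) is also a fair correction of the paper's slightly sloppy statement.
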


\section{Proofs of our main results}

In this section we prove Theorems \ref{main1}, \ref{main2} and
\ref{main3}. We start with proving Theorem \ref{main1}.

\begin{proof}[Proof of Theorem \ref{main1}:] Suppose that $(k-1) | n$ and that $2(k-1)$ does not divide
$n$. Let $f^2(n)$ be a function that tends arbitrarily slowly to
infinity and suppose that $p=\frac{f^2(n)\log n}{n^{k-1}}$. Note
that by deleting the orderings of a $D^{(k)}_{n,q}$, using a similar
argument as a multi-round exposure (we refer the reader to
\cite{JLR} for more details), we obtain a $H^{(k)}_{n,s}$ where
$(1-q)^{k!}=1-s$ (one can just think about $D^{(k)}_{n,q}$ as an
undirected hypergraph such that for every $e\in \binom{n}{k}$ there
are $k!$ independent trials to decide whether to add it).

Now, let us choose $q$ in such a way that
$(1-p/2)(1-q)^{k!f(n)}=1-p$, and observe that $q\geq
\frac{p}{2k!f(n)}=\omega\left(\log n/n^{k-1}\right)$. We generate
$H^{(k)}_{n,p}$ in a multi-round exposure and present it as a union
$\bigcup_{i=0}^{f(n)}H_i$, where $H_0$ is $H^{(k)}_{n,p/2}$ and
$H_i$ is $D^{(k)}_{n,q}$ (which, as stated above, is like
$H^{(k)}_{n,s}$ with $(1-q)^{k!}=1-s$) for each $1\leq i\leq f(n)$
(of course, ignoring the orientations). In addition, all the $H_i$'s
are considered to be independent.

Our strategy goes as follows: First, take $H_0=H^{(k)}_{n,p/2}$ and
pick an arbitrary edge $e*=\{x_1\,\ldots,x_k\}$ (trivially, $H_0$
contains an edge w.h.p.). Now, fix an arbitrary ordering
$(x_1,\ldots,x_k)$ of $e^*$ and let $V^*=\left([n]\setminus
\{x_1,\ldots,x_k\}\right)\cup \{e^*\}$ (that is, $V^*$ is obtained
by deleting all the elements of $e^*$ and adding an auxiliary vertex
$e^*$). For each $i\geq 1$, whenever we expose $H_i$ we define an
auxiliary $k$-uniform directed random hypergraph $D_i$ on a vertex
set $V^*$ in the following way. Every arc $e$ of $H_i$ is being
added to $D_i$ if it satisfies one of the following:
\begin{itemize}
  \item $e\cap e^*=\emptyset$, or
  \item $e\cap e^*=\{x_1\}$, and $x_1$ is not the first vertex of the arc $e$, or
  \item $e\cap e^*=\{x_k\}$ and $x_k$ is the first vertex of the arc
  $e$.
\end{itemize}

Note that indeed, by definition, every $k$-tuple of $V^*$ now appear
with probability $p$, independently at random and that
$|V^*|=n-(k-1)$. Therefore, we clearly have that each of the $D_i$'s
is an independent $D^{(k)}_{n-(k-1),q}$. Moreover, note that
$2(k-1)|n$ and that each directed loose Hamilton cycle of $D_i$ with
the special vertex $e*$ as a starting/ending vertex of the edges
touching it corresponds to a (undirected) loose Hamilton cycle of
$H^{(k)}_{n,p}$. To see the latter, suppose that $e^*v_2\ldots v_t
e^*$ is such a cycle in $D_i$. Now, by definition we have that both
$x_kv_2,\ldots v_k$ and $v_{t-k+2}\ldots v_t e^*$ are arcs of $H_i$,
and therefore, by replacing $e^*$ with its entries $x_1\ldots x_k$,
one obtains a loose Hamilton cycle in $H_i$.

Next, by combining Theorem \ref{thm:frieze} with Lemma
\ref{lemma:colin}, we observe that w.h.p.\ $D_i$ contains a directed
loose Hamilton cycle. Note that by symmetry we have that the
probability for $e^*$ to be an endpoint of an edge on the Hamilton
cycle is $2/k$. Therefore, after exposing all the $D_i$'s, the
probability to fail in finding such a cycle is $(1-2/k)^{f(n)}=o(1)$
as desired. This completes the proof.
\end{proof}

Next we prove Theorem \ref{main2}.

\begin{proof}[Proof of Theorem \ref{main2}:]
Let us assume that $n$ is odd (since otherwise there is nothing to
prove) and that $K>$ is a sufficiently large constant for our needs.
Now, let $q$ be such that $(1-p/2)(1-q)^2=1-p$, and present
$G^n_{n,p}$ as a union $G_1\cup G_2$, where $G_1$ is $G^n_{n,p/2}$
and $G_2$ is $D^n_{n,q}$ (as in the proof of Theorem \ref{main1}, by
ignoring orientations one can see $D^n_{n,q}$ as $G^n_{n,s}$ with
$s$ satisfying $(1-q)^2=1-s$). Next, let $e^*=(x,y)$ be an arbitrary
edge of $G_1$ (trivially, w.h.p.\ there exists an edge), let $c_1$
denote its color, and define an auxiliary edge-colored random
directed graph $D$ as follows. The vertex set of $D$ is
$V^*=\left([n]\setminus {x,y}\right)\cup\{e^*\}$ (that is, we delete
$x$ and $y$ and add an auxiliary vertex $e^*$). The arc set of $D$
consist of all arcs $uv$ of $G_2$ with colors distinct than $c_1$
for which one of the following holds:

\begin{itemize}
\item $\{u,v\}\cap \{x,y\}=\emptyset$, or
\item $v=x$, or
\item $u=y$.
\end{itemize}

A moment's thought now reveals that $D$ is $D^{n-1}_{n-1,s}$, where
$s=(1-1/n)q$, that $n-1$ is even, and that a rainbow Hamilton cycle
of $D$ corresponds to a rainbow Hamilton cycle of $G^n_{n,p}$. Now,
since $s\geq (K'\log n)/n$ for some $K'$ (we can take it to be
arbitrary large), it follows from Corollary \ref{1} that w.h.p.\ $D$
contains a rainbow Hamilton cycle, and this completes the proof.
\end{proof}

Lastly, we prove Theorem \ref{main3}.

\begin{proof}[Proof of Theorem \ref{main3}:] The proof is an
immediate corollary of the result of Frieze and Loh \cite{FL} and
Lemma \ref{lem:rainDirected}.
\end{proof}

{\bf Acknowledgment.} The author would like to thank Alan Frieze for
helpful comments and for pointing out that there is also a small gap
in Bal and Frieze \cite{BF}.

\end{document}